\documentclass[a4paper,11pt,english,leqno]{amsart}
\usepackage{amsmath, amsthm, amsfonts,amssymb, mathrsfs}
\usepackage{verbatim, graphicx, ifthen, enumitem}
\usepackage[T1]{fontenc}
\usepackage[dvipsnames]{xcolor}
\usepackage{hyperref}
\usepackage{cleveref}
\usepackage{tikz}
\usepackage{bm}
\usepackage{caption}
\usepackage{subcaption}
\usepackage[english]{babel}
\usetikzlibrary{calc}
\usepackage[normalem]{ulem}

\newcommand\myshade{85}
\colorlet{mylinkcolor}{violet}
\colorlet{mycitecolor}{YellowOrange}
\colorlet{myurlcolor}{Aquamarine}

\hypersetup{
  linkcolor  = mylinkcolor!\myshade!black,
  citecolor  = mycitecolor!\myshade!black,
  urlcolor   = myurlcolor!\myshade!black,
  colorlinks = true,
}

\newtheorem{thm}{Theorem}[subsection]
\newtheorem{cor}[thm]{Corollary}

\theoremstyle{definition}
\newtheorem{defn}[thm]{Definition}

\theoremstyle{remark}
\newtheorem{rem}[thm]{Remark}

\numberwithin{equation}{subsection}
\numberwithin{figure}{subsection}

\newcommand{\diff}{\mathrm{d}}
\newcommand{\C}{{\mathbb C}}
\newcommand{\R}{{\mathbb R}}

\newcommand{\Z}{{\mathbb Z}}
\newcommand{\Eop}{{\mathbf{E}}}

\newcommand{\imag}{\mathrm{i}}
\newcommand{\e}{\mathrm{e}}

\newcommand{\Lop}{{\mathbf L}}
\newcommand{\Tope}{{\mathbf T}}

\newcommand{\Ordo}{\mathrm{O}}

\newcommand\heatkernel{\beta}
\newcommand\Hspace{\mathscr{H}}

\newcommand{\Dop}{\mathbf{D}}
\newcommand{\dop}{\boldsymbol\partial}

\DeclareMathOperator{\re}{Re}
\DeclareMathOperator{\im}{Im}

\makeatletter
\let\@@citation@@=\citation

\renewcommand{\citation}[1]{\@@citation@@{#1}%
\@for\@tempa:=#1\do{\@ifundefined{cit@\@tempa}%
  {\global\@namedef{cit@\@tempa}{}}{}}%
}

\def\@lbibitem[#1]#2#3\par{%
  \@ifundefined{cit@#2}{}{\item[\@biblabel{#1}\hfill]}%
  \if@filesw
      {\let\protect\noexpand
       \immediate
       \write\@auxout{\string\bibcite{#2}{#1}}}\fi\ignorespaces
  \@ifundefined{cit@#2}{}{#3}}
\def\@bibitem#1#2\par{%
  \@ifundefined{cit@#1}{}{\item}%
  \if@filesw \immediate\write\@auxout
    {\string\bibcite{#1}{\the\value{\@listctr}}}\fi\ignorespaces
  \@ifundefined{cit@#1}{}{#2}}
\makeatother

\begin{document}

%
\title{Spectral interpretation of Riemann zeta zeros}


\author{Haakan Hedenmalm}
\address{
Hedenmalm: 
Department of Mathematics and Computer Science
\\
St Petersburg State University
\\
St Petersburg, Russia
\\
Beijing Institute for Mathematical Sciences and Applications
\\
Huairou District, Beijing 101408, China
\\
Department of Mathematics and Statistics\\
University of Reading\\
RG6 6AX Reading, England}

\email{haakan00@gmail.com}


\subjclass[2020]{{}}
\keywords{
}

\begin{abstract} 
It is a well-known problem to identify the nontrivial zeros of the Riemann zeta function
in terms of an eigenvalue problem. We here find such an eigenvalue problem for
second order differential operators on the line. In a sense, our analysis pushes the analysis
of the zeta function over to the study of the Jacobi theta function, which may be thought of
as the fundamental solution of the heat (or Schr\"odinger) equation on the unit circle (or the
semi-infinite cylinder, if time is added). The eigenvalue problem takes the form
$\Lop\Dop u+\alpha \Lop u=0$, where $\Lop$ and $\Dop$ are first-order 
differential operators, of which only $\Lop$ involves the theta function. In a formal sense, 
then, $\alpha$ is an eigenvalue of the twisted operator $-\Lop\Dop\Lop^{-1}$. Based on this 
formal thinking, we develop the notion of self-adjointness of the pair $(\Lop\Dop,\Lop)$, to 
adapt the Hilbert-P\'olya idea to the spectral problem at hand.
\end{abstract}

\maketitle

\section{Jacobi theta function and related functions}\label{sec_INT}

\subsection{The Jacobi theta function}
\label{subsec-1.1}

The Jacobi theta function $\vartheta_{00}(z,\tau)$ is given by the expression
\[
\vartheta_{00}(z,\tau):=\sum_{n\in\Z}\e^{\imag 2\pi n z+\imag \pi n^2\tau},
\]
where $z,\tau\in\C$ with $\im\,\tau>0$. It solves the heat (or Schr\"odinger) equation
\begin{equation}
\imag\,\partial_\tau\vartheta_{00}(z,\tau)=\frac{1}{4}\,\partial_z^2\vartheta_{00}(z,\tau)
\label{eq:heat-1}
\end{equation}
and has the periodicities in the $z$ variable given by
\[
\vartheta_{00}(z+1,\tau)=\vartheta_{00}(z,\tau),\quad
\vartheta_{00}(z+\tau,\tau)=\exp(-\imag\pi(\tau+2z))\,\vartheta_{00}(z,\tau).
\]
There is also the remarkable identity found by Jacobi which says that
\begin{equation}
\vartheta_{00}(z,\tau)=\heatkernel(z,\tau)\,
\vartheta_{00}\bigg(\frac{z}{\tau},-\frac{1}{\tau}\bigg),
\label{eq:heat-2}
\end{equation}
where
\begin{equation}
\heatkernel(z,\tau):=(\tau/\imag)^{-\frac12}\exp\bigg(-\imag\frac{\pi z^2}{\tau}\bigg)
\label{eq:heat-3}
\end{equation}
is, up to a constant multiple, the heat kernel associated with 
\eqref{eq:heat-1}. This relation has an interpretation in terms of the classical Appell
transform, which will be mentioned as a remark toward the end. For the moment being, 
we shall focus on the $z=0$ instance of \eqref{eq:heat-1}, and write
$\vartheta_{00}(\tau):=\vartheta_{00}(0,\tau)$ to simplify the notation. Then 
\eqref{eq:heat-3} asserts that
\begin{equation}
\vartheta_{00}(\tau)=(\tau/\imag)^{-\frac12}\,
\vartheta_{00}\big(-1/\tau\big),\qquad \tau\in\C_{\im>0},
\label{eq:heat-4}
\end{equation}
which relation may be viewed as an instance of the Poisson summation formula.

\subsection{A function related to the Jacobi theta function}
We form the related expression 
\begin{equation}
\varTheta_{00}(\tau):=(\tau/\imag)^{\frac14}\bigg(\tau^2\vartheta_{00}''(\tau)+
\frac{3}{2}\tau\vartheta_{00}'(\tau)\bigg),
\label{eq:heat-5}
\end{equation}
which is holomorphic in $\C_{\im>0}$, and, as a consequence of \eqref{eq:heat-4}, it
is even-symmetric with respect to the inversion $\tau\mapsto-1/\tau$:
\begin{equation*}
\varTheta_{00}(-1/\tau)=\varTheta_{00}(\tau).
\end{equation*}
Although this fact is known, it is nice to see how it follows from the Jacobi identity
\eqref{eq:heat-4}.
By the functional relation \eqref{eq:heat-4}, the function 
\begin{equation*}
\vartheta_{00}^{\mathrm{sym}}(\tau):=(\tau/\imag)^{\frac14}\vartheta_{00}(\tau)
\end{equation*}
is even inverse-symmetric, that is,
\[
\vartheta_{00}^{\mathrm{sym}}(\tau)=\vartheta_{00}^{\mathrm{sym}}(-1/\tau),
\]
holds. Moreover, if $\dop^\times$ denotes the differential operator 
\begin{equation*}
\dop^\times f(\tau)=\tau f'(\tau),
\end{equation*}
it follows that if $F(\tau)$ is holomorphic in $\C_{\im>0}$, with $F(\tau)=F(-1/\tau)$,
then 
\begin{equation*}
\dop^\times F(\tau)=\tau F'(\tau)=\tau\,\tau^{-2} F'(-1/\tau)=-(\dop^\times F)(-1/\tau),
\end{equation*}
making $\dop^\times F$ odd inverse-symmetric. However, if we apply the differential 
operator twice, even inverse-symmetry gets preserved:
\begin{equation*}
(\dop^\times)^2 F(\tau)=[(\dop^\times)^2F](-1/\tau).
\end{equation*}
A direct calculation reveals that
\begin{equation*}
\varTheta_{00}(\tau)=(\dop^\times)^2\vartheta_{00}^{\mathrm{sym}}(\tau)-\frac{1}{16}
\vartheta_{00}^{\mathrm{sym}}(\tau),
\end{equation*}
so the even inverse-symmetry of $\varTheta_{00}(\tau)$ is immediate from that of 
$\vartheta_{00}^{\mathrm{sym}}(\tau)$, which in turn followed from the Jacobi identity. 
In view of the defining relation \eqref{eq:heat-5}, we have that 
\begin{multline}
\varTheta_{00}(\tau)=(\tau/\imag)^{\frac14}\bigg(\tau^2\vartheta_{00}''(\tau)+
\frac{3}{2}\tau\vartheta_{00}'(\tau)\bigg)
\\
=(\tau/\imag)^{\frac14}\bigg(-\pi^2\tau^2\sum_{n\in\Z}n^4\e^{\imag\pi n^2\tau}+
\frac{3}{2}\imag\pi\tau\sum_{n\in\Z}n^2\e^{\imag\pi n^2\tau}\bigg),
\label{eq:heat-6}
\end{multline}
and, consequently, for $0<t<+\infty$,
\begin{multline}
\varTheta_{00}(\imag t^2)
=t^{\frac12}\bigg(\pi^2 t^4\sum_{n\in\Z}n^4\e^{-\pi n^2 t^2}
-\frac{3}{2}\pi t^2\sum_{n\in\Z}n^2\e^{-\pi n^2 t^2}\bigg)
\\
=\pi t^{\frac92}
\sum_{n=1}^{+\infty}n^2(2\pi n^2-3t^{-2})\e^{-\pi n^2 t^2},
\label{eq:heat-7}
\end{multline}
a real-valued expression.
In the interval $1\le t<+\infty$, we have the estimates
\begin{equation}
0<\pi t^{\frac92}
\sum_{n=1}^{+\infty}n^2(2\pi n^2-3)\e^{-\pi n^2 t^2}
\le
\varTheta_{00}(\imag t^2)
\le\pi t^{\frac92}
\sum_{n=1}^{+\infty}2\pi n^4\e^{-\pi n^2 t^2},
\label{eq:heat-8}
\end{equation}
and by the even inversion-symmetry 
$\varTheta_{00}(\imag t)=\varTheta_{00}(\imag/t)$, we have on the remaining 
interval $0<t\le1$ the estimates
\begin{multline}
0<\pi t^{\frac94}
\sum_{n=1}^{+\infty}n^2(2\pi n^2-3)\e^{-\pi n^2/t^2}
\le\varTheta_{00}(\imag t^2)
\\
\le\pi t^{\frac92}
\sum_{n=1}^{+\infty}2\pi n^4\e^{-\pi n^2/t^2}.
\label{eq:heat-9}
\end{multline}
In particular, the function $\varTheta_{00}(\imag t^2)$ has Gaussian decay as
$t\to+\infty$ and by symmetry it also drops precipitously  as $t\to0^+$. 

\section{Observations by Riemann, P\'olya, Connes, and others}

\subsection{Dilations sum operator}
As observed, e.g., by Alain Connes \cite{Connes}, if $\Eop$ is the operator
defined by 
\begin{equation}
\Eop f(t):=\sum_{n=1}^{+\infty}f(nt),
\end{equation}
we may express $\varTheta_{00}(\imag t^2)$ in the form
\begin{equation}
\varTheta_{00}(\imag t^2)=t^{\frac12}\Eop h_{00}(t),\qquad 0<t<+\infty,
\end{equation}
where
\begin{equation}
h_{00}(t):=\frac{\pi}{2}(2\pi t^4-3t^2)\,\e^{-\pi t^2}.
\end{equation}
Curiously, the function $h_{00}$ is a linear combination of two Hermite functions
of degrees $0$ and $4$ with the defining property (up to constant multiples) that  
the integral over the real line vanishes (see Connes' remark \cite{Connes}). 
This means that the sum $\Eop h_{00}$,
while converging to a positive smooth function on $\R_{\ne0}$, must develop
a compensatory negative point mass at the origin in the sense of distribution theory. 
We will not explore this point further here.
However, we note that there is an analog of the Euler product formula for the 
Riemann zeta function which applies in the context of the operator $\Eop$:
\begin{equation}
\Eop=\prod_p \Eop^{\langle p\rangle},
\end{equation}
where $p$ runs over the primes, and 
\begin{equation}
\Eop^{\langle p\rangle}f(t)=\sum_{k=0}^{+\infty}f(p^kt).
\end{equation}
The operators in the product all commute, so there is no difficulty in interpreting the 
product, so long as the sums are meaningful on relevant function spaces.

\subsection{Riemann zeta zeros}
Riemann, in his classic 1859 work \emph{\"Uber die Anzahl der Primzahlen unter 
einer gegebener Gr\"osse} (see \cite{Riemann-works}),  studied the function 
$\varTheta_{00}(\imag t^2)$ because of the Mellin transform relationship
\begin{equation} 
\Xi(x)=\int_{0^+}^{+\infty}\varTheta_{00}(\imag t^2) \,t^{\imag x}\frac{\diff t}{t},\qquad
x\in\C,
\label{eq:Xi-1}
\end{equation}
where $\Xi(x)=\xi(\frac12+\imag x)$ and 
\begin{equation}
\xi(s)=\frac12\,s(s-1)\pi^{-s/2}\Gamma(s/2)\,\zeta(s),\qquad s\in\C.
\end{equation}
This formula appears also in Alain Connes' letter to Riemann \cite{Connes}, and constitutes 
the basis for George P\'olya's study of approximate $\Xi$-functions \cite{Polya}. 
It is clear from the formula that the zeros of $\Xi$ correspond to the zeros of the zeta function 
along the critical line $\re s=\frac12$, and they are all located along the real line should we trust 
the Riemann hypothesis. 

\section{Characterizing nontrivial zeta zeros as eigenvalues}

\subsection{Earlier work} Substantial effort has been made to understand the Riemann 
zeta zeros spectrally. For a comprehensive survey, we recommend the recent contribution
by Connes \cite{Connes}. We might nevertheless mention here the innovative approach by
Michael Berry and Jonathan Keating \cite{BerryKeating} which tries to adapt quantized
Hamiltonian dynamics to the required setting.

\subsection{Differential operators and differential equations}
We consider the differential operator
\begin{equation}
\Dop^\times f(t):=\frac{t}{\imag}\,f'(t),
\end{equation}
for differentiable functions $f$ on the real line. It is a quantization of the Hamiltonian 
$xp$, as envisaged by Berry and Keating \cite{BerryKeating}. Moreover, for a given 
$C^1$-smooth function $\varphi$, we consider the differential operators $\Lop_\varphi$ given 
by
\begin{equation}
\Lop_\varphi f(t)=\Dop^\times f(t)+f(t)\,\Dop^\times\varphi(t).
\end{equation}

\subsection{Characterization of critical line zeta zeros}

We now present a theorem which identifies the real roots of $\Xi$ as the eigenvalues (in an
extended sense) to a second order differential equation with vanishing boundary data.
We think that referring to them as eigenvalues is justified in the same way as in the 
Steklov eigenvalue problem (see, e.g., the survey paper by Girouard and Polterovich 
\cite{GirPolt}). 

We shall need the function
$\phi_{00}(t):=-\log\varTheta_{00}(\imag t^2)$ for $t\in\R_{>0}$, so that $\phi_{00}(t)$ is
real-valued with the inversion symmetry $\phi_{00}(1/t)=\phi_{00}(t)$, and, in addition, has 
the asymptotics
\[
\phi_{00}(t)=\pi t^2-\frac{9}{2}\log t-\log(2\pi^2)+\Ordo(t^{-2})\quad\text{as}\quad t\to+\infty.
\]

\begin{thm}
\label{thm:main-1}
For given $\alpha\in\R$, the solution  to the boundary value problem 
\begin{equation*}
\begin{cases}
\Lop_{\phi_{00}}\Dop^\times u+\alpha \Lop_{\phi_{00}}u=0\quad\text{on}\,\,\,\R_{>0},
\\
\lim_{t\to0^+}u(t)=\lim_{t\to+\infty}u(t)=0,
\end{cases}
\end{equation*}
is unique, i.e., $u=0$, unless $\alpha$ is such that $\Xi(\alpha)=0$. Moreover, if $\alpha\in\R$
has $\Xi(\alpha)=0$, then the solution is unique up to multiplicative constants, and of the form
$u=Cu_\alpha$, where $C$ is a constant and 
\[
u_\alpha(t)=t^{-\imag \alpha}\int_{0+}^{t}y^{\imag \alpha}\,
\varTheta_{00}(\imag y^2)\,\frac{\diff y}{y}.
\]
\end{thm}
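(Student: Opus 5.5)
The approach is to integrate the equation twice by factoring out $\Lop_{\phi_{00}}$. Write $\Theta(t):=\varTheta_{00}(\imag t^2)=\e^{-\phi_{00}(t)}$. The key observation is the conjugation identity
\[
\Lop_{\phi_{00}} g=\Dop^\times g+g\,\Dop^\times\phi_{00}=\e^{-\phi_{00}}\Dop^\times\big(\e^{\phi_{00}}g\big)=\Theta\,\Dop^\times(g/\Theta),
\]
valid because $\Theta>0$ on $\R_{>0}$ by \eqref{eq:heat-8}--\eqref{eq:heat-9}, so $\phi_{00}$ is smooth and real there. The equation $\Lop_{\phi_{00}}(\Dop^\times u+\alpha u)=0$ then reads $\Dop^\times\big((\Dop^\times u+\alpha u)/\Theta\big)=0$. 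Since $\Dop^\times=\frac{t}{\imag}\frac{\diff}{\diff t}$ has only constants in its kernel on the connected set $\R_{>0}$, this gives
\[
\Dop^\times u+\alpha u=c\,\Theta
\]
for some constant $c\in\C$. Multiplying by the integrating factor $\imag t^{\imag\alpha-1}$ turns this into $\frac{\diff}{\diff t}\big(t^{\imag\alpha}u\big)=\imag c\,t^{\imag\alpha}\Theta(t)/t$. Hence every solution on $\R_{>0}$ has the form
\[
u(t)=\imag c\,u_\alpha(t)+d\,t^{-\imag\alpha},
\]
with $d\in\C$. The integral defining $u_\alpha$ converges at $0^+$ because $\Theta$ has Gaussian-type decay as $t\to0^+$ by \eqref{eq:heat-9}, and $|y^{\imag\alpha}|=1$ for real $\alpha$.

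Next come the boundary conditions. Since $\alpha$ is real, $|t^{-\imag\alpha}|=1$. Moreover $|u_\alpha(t)|\le\int_0^t\Theta(y)\,\diff y/y\to0$ as $t\to0^+$. So $\lim_{t\to0^+}u(t)=0$ forces $\lim_{t\to 0^+} d\,t^{-\imag\alpha}=0$. This requires $d=0$: the function $t^{-\imag\alpha}$ has constant modulus $1$, so $|d|=\lim|u-\imag cu_\alpha|=0$.

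At $+\infty$, the Mellin relation \eqref{eq:Xi-1} with $x=\alpha$ gives
\[
t^{\imag\alpha}u_\alpha(t)=\int_{0^+}^t y^{\imag\alpha}\Theta(y)\frac{\diff y}{y}\to\Xi(\alpha)\quad\text{as } t\to+\infty.
\]
The convergence is absolute thanks to the Gaussian decay at infinity in \eqref{eq:heat-8}. Consequently $|u_\alpha(t)|\to|\Xi(\alpha)|$. If $\Xi(\alpha)\ne0$, the condition $u(t)\to0$ forces $c=0$, so $u=0$. If $\Xi(\alpha)=0$, then $u_\alpha(t)\to0$, and every solution satisfying the boundary conditions is $C u_\alpha$; conversely $u_\alpha$ is one such solution. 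It is nontrivial because $u_\alpha(t)\sim t^{-\imag\alpha}\int_0^t\Theta\,\diff y/y$ is nonzero for small $t$, by positivity of $\Theta$ and the fact that $y^{\imag\alpha}\approx t^{\imag\alpha}$ near $0$.

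The step requiring care is the regularity and positivity of $\Theta$. This is needed to legitimize dividing by $\Theta$, that is, to make sense of $\phi_{00}$ and the factorization of $\Lop_{\phi_{00}}$, and to ensure the integrals converge at both ends. Both facts are supplied by the two-sided estimates \eqref{eq:heat-8}--\eqref{eq:heat-9}. One should also specify the solution class, say $u\in C^2(\R_{>0})$, so that the kernel of $\Dop^\times$ consists of constants. Otherwise the argument is two explicit integrations followed by the observation that the constant-modulus homogeneous solution $t^{-\imag\alpha}$ cannot satisfy the boundary conditions.
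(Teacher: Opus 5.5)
Your proposal is correct and follows essentially the same route as the paper: write the equation as $\Lop_{\phi_{00}}(\Dop^\times u+\alpha u)=0$, use $\Lop_{\phi_{00}}=\e^{-\phi_{00}}\Dop^\times\e^{\phi_{00}}$ to get $\Dop^\times u+\alpha u=c\,\e^{-\phi_{00}}$, integrate against $t^{\imag\alpha}$, and read off $c\,\Xi(\alpha)=0$ from the condition at $+\infty$ via \eqref{eq:Xi-1}. Your explicit treatment of the homogeneous term $d\,t^{-\imag\alpha}$ and of the positivity of $\varTheta_{00}(\imag t^2)$ only spells out what the paper leaves implicit; nontriviality of $u_\alpha$ also follows more simply from $\Dop^\times u_\alpha+\alpha u_\alpha=\varTheta_{00}(\imag t^2)/\imag\neq0$.
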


\begin{proof}
We first check that for real roots $\alpha$ to $\Xi(\alpha)=0$, the indicated function $u_\alpha$
solves the indicated second order boundary value problem. By the product rule and the fundamental
theorem of calculus, we have that
\begin{multline}
\Dop^\times u_\alpha(t)=\frac{t}{\imag}u_\alpha'(t)
\\
=\frac{t}{\imag}\bigg(
-\imag\alpha t^{-\imag\alpha-1}\int_{0+}^{t}y^{\imag \alpha}\,
\varTheta_{00}(\imag y^2)\,\frac{\diff y}{y}+\frac{1}{t}\varTheta_{00}(\imag t^2)\bigg),
\end{multline}
so that, as a consequence,
\begin{equation}
\Dop^\times u_\alpha(t)+\alpha u_\alpha(t)=\frac{1}{\imag}\, \varTheta_{00}(\imag t^2).
\end{equation}
Next, since $\varTheta_{00}(\imag t^2)=\exp(-\varphi_{00}(t))$, we find by the chain rule that 
\begin{multline}
\Lop_{\phi_{00}}\Dop^\times u_\alpha(t)+\alpha \Lop_{\phi_{00}}u_\alpha(t)
=\frac{1}{\imag}\, \Lop_{\phi_{00}}(\varTheta_{00}(\imag t^2))
\\
=\frac{1}{\imag}\, \Lop_{\phi_{00}}(\e^{-\phi_{00}(t)})=\frac{1}{\imag}
\big(\Dop^\times\e^{-\phi_{00}(t)}+\e^{-\phi_{00}(t)}\Dop^\times\phi_{00}(t)\big)=0
\end{multline}
by the chain rule.
We should also verify the boundary values at $0^+$ and $+\infty$. To this end, we
first observe that since $\alpha\in\R$, we have
\begin{equation}
\lim_{t\to0^+}|u_\alpha(t)|=\lim_{t\to0^+}\bigg|\int_{0+}^{t}y^{\imag \alpha}\,
\varTheta_{00}(\imag y^2)\,\frac{\diff y}{y}\bigg|=0,
\end{equation}
and, similarly, 
\begin{multline}
\lim_{t\to+\infty}|u_\alpha(t)|=\lim_{t\to+\infty}\bigg|\int_{0+}^{t}y^{\imag \alpha}\,
\varTheta_{00}(\imag y^2)\,\frac{\diff y}{y}\bigg|
\\
=\bigg|\int_{0+}^{+\infty}y^{\imag \alpha}\,
\varTheta_{00}(\imag y^2)\,\frac{\diff y}{y}\bigg|=|\Xi(\alpha)|=0,
\end{multline}
by \eqref{eq:Xi-1}.

Next, we obtain the uniqueness of the solution. As the equation may be written in the
form
\begin{equation}
\Lop_{\phi_{00}}(\Dop^\times u+\alpha u)=0,
\label{eq:ODE-1}
\end{equation}
we put $v:=\Dop^\times u+\alpha u$, so that the equation asserts that 
$\Lop_{\phi_{00}}v=0$ on $\R_{>0}$. We should find the general solution to the latter
equation. The identity
\[
\Lop_{\phi_{00}}f=\e^{-\phi_{00}}\Dop^\times(\e^{\phi_{00}}f)
\]
shows that the general solution to $\Lop_{\phi_{00}}v=0$ is $v=C'\,\e^{-\phi_{00}}$
for some constant $C'$. But then \eqref{eq:ODE-1} implies that
\begin{equation}
\Dop^\times u+\alpha u=v=C'\,\e^{-\phi_{00}}
\label{eq:ODE-2}
\end{equation}
holds for some constant $C'$. Since generally
\[
\Dop^\times u(t)+\alpha u(t)=t^{-\imag\alpha}\Dop^\times(t^{\imag\alpha}u(t))
\]
it now follows from \eqref{eq:ODE-2} that
\[
\Dop^\times(t^{\imag\alpha}u(t))=C'\,t^{\imag\alpha}\e^{-\phi_{00}(t)}
=C'\,t^{\imag\alpha}\varTheta_{00}(\imag t^2)
\]
on the positive half-line $\R_{>0}$. Since $\alpha$ is real and $u(0^+)=0$ is 
assumed, we must have 
\[
t^{\imag\alpha}u(t)=\imag\,C'\int_{0^+}^{t}y^{\imag\alpha}\varTheta_{00}(\imag y^2)
\frac{\diff y}{y}
\]
so that with $C:=\imag\,C'$ we have
\begin{equation}
u(t)=C t^{-\imag\alpha}
\int_{0^+}^{t}y^{\imag\alpha}\varTheta_{00}(\imag y^2)
\frac{\diff y}{y}.
\label{eq:ODE-3}
\end{equation}
The other boundary condition, $u(+\infty)=0$, then amounts to having
\[
C\int_{0^+}^{+\infty}y^{\imag\alpha}\varTheta_{00}(\imag y^2)
\frac{\diff y}{y}=0,
\]
which by \eqref{eq:Xi-1} is the same as $C\Xi(\alpha)=0$. Consequently, 
either $C=0$ and hence $u(t)\equiv0$, or $\alpha$ is a root
of the equation $\Xi(\alpha)=0$, in which case $u$ must be of the given form 
$u=C u_\alpha$, as follows from the formula \eqref{eq:ODE-3}.
This completes the proof of the theorem.
\end{proof}

\begin{rem}
$(a)$ The fact that there is a unique eigenfunction at each eigenvalue
lends some weak support to the hypothesis that the real zeros of $\Xi$ are all simple. 

\noindent$(b)$ The commutator calculation 
\begin{multline*}
\Lop_{\phi_{00}}\Dop^\times f-\Dop^\times\Lop_{\phi_{00}}f=(\Dop^\times)^2f
+\Dop^\times\phi_{00}\Dop^\times f
\\
-(\Dop^\times)^2f-\Dop^\times(f\Dop^\times\phi_{00})
=-[(\Dop^\times)^2\phi_{00}]f
\end{multline*}
shows that the differential equation of Theorem \ref{thm:main-1} may be written in the
form
\begin{equation}
\Dop^\times \Lop_{\phi_{00}}u-[(\Dop^\times)^2\phi_{00}]u+\alpha\Lop_{\phi_{00}}u=0
\quad\text{on}\,\,\,\R_{>0}.
\label{eq:commutator}
\end{equation}
This may be useful to us as we would like to express the equation in terms of the 
function $\Lop_{\phi_{00}}u$ as much as possible. In \eqref{eq:commutator}, we express 
the difficulty of doing this in lowest order term form.  The second derivative function 
$(\Dop^\times)^2\phi_{00}$ which appears measures the convexity of $\phi_{00}$ is 
logarithmic coordinates.
\end{rem}

There is a variant of the theorem which applies to general complex $\alpha$. As the 
proof follows along the same lines, we omit the necessary details.

\begin{thm}
\label{thm:main-1.5}
For given $\alpha\in\C$, the solution  to the boundary value problem 
\begin{equation*}
\begin{cases}
\Lop_{\phi_{00}}\Dop^\times u+\alpha \Lop_{\phi_{00}}u=0\quad\text{on}\,\,\,\R_{>0},
\\
\lim_{t\to0^+}t^{-\im\alpha}u(t)=\lim_{t\to+\infty}t^{-\im\alpha}u(t)=0,
\end{cases}
\end{equation*}
is unique, i.e., $u=0$, unless $\alpha$ is such that $\Xi(\alpha)=0$. Moreover, if 
$\Xi(\alpha)=0$, then the solution is unique up to multiplicative constants, and of the form
$u=Cu_\alpha$, where $C$ is a constant and 
\[
u_\alpha(t)=t^{-\imag \alpha}\int_{0+}^{t}y^{\imag \alpha}\,
\varTheta_{00}(\imag y^2)\,\frac{\diff y}{y}.
\]
\end{thm}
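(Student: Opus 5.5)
The plan is to follow the proof of Theorem \ref{thm:main-1} almost verbatim. The only new feature for complex $\alpha$ is that $|t^{\pm\imag\alpha}|=t^{\mp\im\alpha}$ is no longer identically $1$. The boundary conditions $t^{-\im\alpha}u(t)\to0$ have been normalized precisely to absorb this factor: for $w(t):=t^{\imag\alpha}u(t)$ we have $|w(t)|=t^{-\im\alpha}|u(t)|$, so the conditions say exactly that $w(0^+)=w(+\infty)=0$. I will therefore run the whole argument in terms of $w$.

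\emph{Step 1: convergence of the Mellin integral.} For every $\alpha\in\C$ the integral
\[
\int_{0^+}^{+\infty}y^{\imag\alpha}\varTheta_{00}(\imag y^2)\frac{\diff y}{y}
\]
converges absolutely, since $|y^{\imag\alpha}|=y^{-\im\alpha}$ grows at most polynomially at $0^+$ and at $+\infty$. By \eqref{eq:heat-8} and \eqref{eq:heat-9}, $\varTheta_{00}(\imag y^2)$ decays like $\e^{-\pi y^2}$ as $y\to+\infty$ and like $\e^{-\pi/y^2}$ as $y\to0^+$, which dominates any power. This justifies \eqref{eq:Xi-1} for all $\alpha\in\C$. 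It also shows that $u_\alpha$ is well defined and that $w_\alpha(t)=\int_{0^+}^t y^{\imag\alpha}\varTheta_{00}(\imag y^2)\,\diff y/y$ tends to $0$ as $t\to0^+$ and to $\Xi(\alpha)$ as $t\to+\infty$.

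\emph{Step 2: $u_\alpha$ is a solution when $\Xi(\alpha)=0$.} The computation from Theorem \ref{thm:main-1} uses only the product rule and the fundamental theorem of calculus, so it is valid for any $\alpha\in\C$. It gives
\[
\Dop^\times u_\alpha+\alpha u_\alpha=\frac1\imag\,\e^{-\phi_{00}}.
\]
Since $\Lop_{\phi_{00}}\e^{-\phi_{00}}=0$, the differential equation holds. The boundary conditions reduce to $w_\alpha(0^+)=0$ and $w_\alpha(+\infty)=\Xi(\alpha)=0$, both of which follow from Step 1.

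\emph{Step 3: uniqueness.} Let $u$ be any solution and set $v:=\Dop^\times u+\alpha u$. Using $\Lop_{\phi_{00}}f=\e^{-\phi_{00}}\Dop^\times(\e^{\phi_{00}}f)$, the equation $\Lop_{\phi_{00}}v=0$ forces $v=C'\e^{-\phi_{00}}$ for some constant $C'$. The identity
\[
\Dop^\times u+\alpha u=t^{-\imag\alpha}\Dop^\times(t^{\imag\alpha}u),
\]
which holds for complex $\alpha$ as well, then gives $t\,w'(t)=\imag C' t^{\imag\alpha}\varTheta_{00}(\imag t^2)$. Integrating, $w(t)=\imag C'\,w_\alpha(t)+K$ for some constant $K$. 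The condition $w(0^+)=0$, together with $w_\alpha(0^+)=0$ from Step 1, yields $K=0$, so $u=C u_\alpha$ with $C=\imag C'$. The condition $w(+\infty)=0$ then reads $C\,\Xi(\alpha)=0$. Hence either $C=0$ and $u\equiv0$, or $\Xi(\alpha)=0$ and $u=Cu_\alpha$.

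The only step requiring genuine care beyond the real case is Step 1: checking that the super-polynomial decay of $\varTheta_{00}(\imag y^2)$ at both ends, read off from \eqref{eq:heat-8} and \eqref{eq:heat-9}, beats the polynomial weight $y^{-\im\alpha}$, so that the limits of $w_\alpha$ exist for arbitrary complex $\alpha$. Once this is established, everything else is the algebra of Theorem \ref{thm:main-1}, transported through the substitution $w=t^{\imag\alpha}u$.
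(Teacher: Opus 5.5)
Your proposal is correct and takes the route the paper intends: the paper omits this proof, saying only that it follows the lines of Theorem \ref{thm:main-1}, and your argument is exactly that adaptation. Two details you supply are left unstated in the paper. First, the substitution $w=t^{\imag\alpha}u$ turns the $\alpha$-dependent boundary conditions into $w(0^+)=w(+\infty)=0$. Second, you check that the super-polynomial decay of $\varTheta_{00}(\imag y^2)$ at both ends dominates the weight $|y^{\imag\alpha}|=y^{-\im\alpha}$, so the limits of $w_\alpha$ exist for every complex $\alpha$.
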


\begin{rem}
For any $\alpha\in\C$ with $\Xi(\alpha)=0$, we know that $|\im\alpha|<\frac12$,
and it is easy to verify that 
\begin{equation}
\lim_{t\to0^+}\forall N\in\Z_{>0}:\quad t^{-N}u_\alpha(t)=\lim_{t\to+\infty}t^N u_\alpha(t)=0.
\label{eq:Ncond}
\end{equation}
This allows us to replace the boundary condition in 
Theorem \ref{thm:main-1.5} by the stronger condition \eqref{eq:Ncond}. The advantage 
would then be that the boundary condition no longer involves the parameter $\alpha$. 
\end{rem}

\section{In search of inner product structure}

\subsection{Self-adjointness}
The well-known Hilbert-P\'olya conjectures asks for a spectral interpretation of the 
zeros of $\Xi(\alpha)=0$. In brief, if we could find an unbounded self-adjoint operator 
$\Tope$ on a Hilbert space, its eigenvalues will automatically be real, and if those 
eigenvalues would coincide with the zeros $\alpha$ of $\Xi(\alpha)=0$, then the Riemann 
hypothesis would follow. Theorem \ref{thm:main-1} does supply an eigenvalue interpretation,
but as it is not in terms of a single unbounded operator it is unclear what would be a
proper substitute.

\subsection{In search of an appropriate sesquilinear form}
We let $\Hspace_{\mathrm{finite}}$ denote the space of all functions
\[
u(t)=\sum_{\alpha:\Xi(\alpha)=0}c(\alpha)\,u_\alpha(t)
\]
provided that the sums are actually finite, that is, $c(\alpha)=0$ holds except for finitely
many $\alpha$. Here, we include also possible roots off the real line in the strip 
$|\im\alpha|<\frac12$. In this instance, although we have studied them in any detail, 
the eigenfunctions $u_\alpha$ are given by the same formula as in Theorem 
\ref{thm:main-1}. It makes sense to consider a sesquilinear inner product of the form
\begin{equation}
\langle u,v\rangle_{1}:=\langle \Lop_{\phi_{00}}u, \Lop_{\phi_{00}}v\rangle_2,
\qquad u,v\in\Hspace_{\mathrm{finite}},
\label{eq:IP-1&2}
\end{equation}
where the inner product $\langle\cdot,\cdot\rangle_2$ is associated with a Hilbert space.
 This would mean that $\|\cdot\|_1$ defines a Hilbert seminorm, in the sense that the vector
 $\exp(-\phi_{00})$ has length $0$, since $\Lop_{\phi_{00}}(\exp(-\phi_{00}))=0$ 
 automatically.
 
\begin{defn}
The second order boundary value problem of Theorem \ref{thm:main-1} involving the pair
of differential operators $(\Lop_{\phi_{00}}\Dop^\times,\Lop_{\phi_{00}})$ is said to be
\emph{self-adjoint} with respect to the sesquilinear form $\langle\cdot,\cdot\rangle_1$ connected
via \eqref{eq:IP-1&2}  if
\[
\langle \Lop_{\phi_{00}}\Dop^\times u,\Lop_{\phi_{00}}v\rangle_2=
\langle \Lop_{\phi_{00}} u,\Lop_{\phi_{00}}\Dop^\times v\rangle_2
\]
holds for all $u,v\in \Hspace_{\mathrm{finite}}$.
 \end{defn}
  
\begin{thm}
If we can find a pair of related sesquilinear forms $\langle\cdot,\cdot\rangle_1$ and 
$\langle\cdot,\cdot\rangle_2$, such that the operator pair
$(\Lop_{\phi_{00}}\Dop^\times,\Lop_{\phi_{00}})$ is 
self-adjoint with respect to $\langle\cdot,\cdot\rangle_1$, we have the following.  

\noindent $(a)$ If $\alpha\notin\R$ and $\Xi(\alpha)=0$, then 
\[
\|u_\alpha\|_1^2=\langle u_\alpha,u_\alpha\rangle_1=
\langle \Lop_{\phi_{00}}u_\alpha, \Lop_{\phi_{00}}u_\alpha\rangle_2=0.
\]

\noindent $(b)$ If $\alpha,\beta\in\R$ with $\alpha\ne\beta$ and 
$\Xi(\alpha)=\Xi(\beta)=0$, then
\[
\langle u_\alpha,u_\beta\rangle_1=
\langle \Lop_{\phi_{00}}u_\alpha, \Lop_{\phi_{00}}u_\beta\rangle_2=0.
\]
\label{thm:main-2}
\end{thm}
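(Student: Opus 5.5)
The argument is the standard Hilbert–Pólya one, run through the pair $(\Lop_{\phi_{00}}\Dop^\times,\Lop_{\phi_{00}})$ instead of through a single operator. The only input we need is the eigenvalue relation from the proof of Theorem \ref{thm:main-1}: for every root $\alpha$ of $\Xi$, real or not, the function $u_\alpha$ satisfies
\[
\Lop_{\phi_{00}}\Dop^\times u_\alpha=-\alpha\,\Lop_{\phi_{00}}u_\alpha .
\]
This identity is purely algebraic. It comes from $\Dop^\times u_\alpha+\alpha u_\alpha=\imag^{-1}\e^{-\phi_{00}}$ together with $\Lop_{\phi_{00}}\e^{-\phi_{00}}=0$. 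Neither step uses the boundary values or the reality of $\alpha$, so by Theorem \ref{thm:main-1.5} the relation also holds for non-real roots in the strip $|\im\alpha|<\frac12$. In particular every $u_\alpha$ lies in $\Hspace_{\mathrm{finite}}$, so the self-adjointness hypothesis applies to any pair $u_\alpha,u_\beta$.

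For $(a)$, I would take $u=v=u_\alpha$ in the self-adjointness identity. The left side is $\langle -\alpha\Lop_{\phi_{00}}u_\alpha,\Lop_{\phi_{00}}u_\alpha\rangle_2=-\alpha\|u_\alpha\|_1^2$. The right side is $\langle \Lop_{\phi_{00}}u_\alpha,-\alpha\Lop_{\phi_{00}}u_\alpha\rangle_2=-\bar\alpha\|u_\alpha\|_1^2$, using the convention that $\langle\cdot,\cdot\rangle_2$ is linear in the first slot and conjugate-linear in the second. Subtracting gives $(\bar\alpha-\alpha)\|u_\alpha\|_1^2=0$. Since $\alpha\notin\R$ we have $\bar\alpha-\alpha=-2\imag\,\im\alpha\neq0$, and hence $\|u_\alpha\|_1^2=0$.

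For $(b)$, I would take $u=u_\alpha$ and $v=u_\beta$. The same manipulation gives
\[
-\alpha\langle u_\alpha,u_\beta\rangle_1=-\bar\beta\langle u_\alpha,u_\beta\rangle_1=-\beta\langle u_\alpha,u_\beta\rangle_1,
\]
where the last step uses $\beta\in\R$. Since $\alpha\ne\beta$, we get $\langle u_\alpha,u_\beta\rangle_1=0$.

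There is no real obstacle here. The one point needing care is the one flagged above: the eigen-relation must hold for non-real roots as well, which is why it should be traced to the algebraic computation rather than to the real-$\alpha$ boundary value problem. Sesquilinearity of $\langle\cdot,\cdot\rangle_2$ is used only to pull out scalars, so no positivity of the form is required.
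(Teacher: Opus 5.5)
Your proposal is correct and matches the paper's proof. In both, you substitute $u=v=u_\alpha$ (respectively $u=u_\alpha$, $v=u_\beta$) into the self-adjointness identity and use the eigen-relation $\Lop_{\phi_{00}}\Dop^\times u_\alpha=-\alpha\Lop_{\phi_{00}}u_\alpha$ together with sesquilinearity. This gives $(\bar\alpha-\alpha)\|u_\alpha\|_1^2=0$ (respectively $(\beta-\alpha)\langle u_\alpha,u_\beta\rangle_1=0$). Your remark that the eigen-relation is purely algebraic, and hence valid for non-real roots, makes explicit a point the paper leaves implicit via Theorem \ref{thm:main-1.5}.
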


\begin{proof}
We first consider part $(a)$. We have a point $\alpha\notin\R$ with $\Xi(\alpha)=0$. 
We apply the self-adjointness property to $u=v=u_\alpha$, which says that
\[
\langle \Lop_{\phi_{00}}\Dop^\times u_\alpha,\Lop_{\phi_{00}}u_\alpha\rangle_2=
\langle \Lop_{\phi_{00}} u_\alpha,\Lop_{\phi_{00}}\Dop^\times u_\alpha\rangle_2,
\]
and since $u_\alpha$ solves the differential equation 
\[
\Lop_{\phi_{00}}\Dop^\times u_\alpha+\alpha\Lop_{\phi_{00}}u_\alpha=0,
\]
it follows that 
\[
-\alpha\langle \Lop_{\phi_{00}}u_\alpha,\Lop_{\phi_{00}}u_\alpha\rangle_2=
-\bar\alpha\langle \Lop_{\phi_{00}} u_\alpha,\Lop_{\phi_{00}} u_\alpha\rangle_2.
\]
Since, by assumption, $\alpha\ne\bar\alpha$, this is only possible if 
\[
\langle u_\alpha,u_\alpha\rangle_1=
\langle \Lop_{\phi_{00}}u_\alpha,\Lop_{\phi_{00}}u_\alpha\rangle_2=0,
\]
as claimed.

We next turn to part $(b)$. We apply the self-adjointness property to the $u=u_\alpha$
and $v=u_\beta$, where $\alpha,\beta\in\R$ with $\alpha\ne\beta$ and 
$\Xi(\alpha)=\Xi(\beta)=0$, which asserts that
\[
\langle \Lop_{\phi_{00}}\Dop^\times u_\alpha,\Lop_{\phi_{00}}u_\beta\rangle_2=
\langle \Lop_{\phi_{00}} u_\alpha,\Lop_{\phi_{00}}\Dop^\times u_\beta\rangle_2.
\]
If we use that $u_\alpha$ and $u_\beta$ solves the respective differential equation,
it follows that
\[
-\alpha\langle \Lop_{\phi_{00}}u_\alpha,\Lop_{\phi_{00}}u_\beta\rangle_2=
-\beta\langle \Lop_{\phi_{00}} u_\alpha,\Lop_{\phi_{00}} u_\beta\rangle_2,
\]
and since $\alpha\ne\beta$ is assumed, we obtain
\[
\langle u_\alpha,u_\beta\rangle_1=
\langle \Lop_{\phi_{00}}u_\alpha,\Lop_{\phi_{00}}u_\beta\rangle_2=0.
\]
This completes the proof of the theorem.
\end{proof}

\begin{cor}
Under the assumptions of Theorem \ref{thm:main-2}, and provided that the inner
product $\langle\cdot,\cdot\rangle_2$ comes from a Hilbert space, it follows that
all roots of $\Xi(\alpha)=0$ are real.
\end{cor}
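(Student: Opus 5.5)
The plan is to combine part $(a)$ of Theorem \ref{thm:main-2} with positive definiteness of the Hilbert-space inner product. Argue by contradiction: suppose $\alpha\in\C\setminus\R$ satisfies $\Xi(\alpha)=0$. As noted in the remark following Theorem \ref{thm:main-1.5}, $|\im\alpha|<\frac12$, so $u_\alpha$ is well defined, belongs to $\Hspace_{\mathrm{finite}}$, and solves the boundary value problem of Theorem \ref{thm:main-1.5}. Part $(a)$ of Theorem \ref{thm:main-2} then gives $\langle\Lop_{\phi_{00}}u_\alpha,\Lop_{\phi_{00}}u_\alpha\rangle_2=0$. If $\langle\cdot,\cdot\rangle_2$ is a genuine Hilbert space inner product, positive definiteness forces $\Lop_{\phi_{00}}u_\alpha=0$ identically on $\R_{>0}$.

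The second step converts this into a contradiction. The proof of Theorem \ref{thm:main-1} uses the identity $\Lop_{\phi_{00}}f=\e^{-\phi_{00}}\Dop^\times(\e^{\phi_{00}}f)$. Hence $\Lop_{\phi_{00}}u_\alpha=0$ gives $u_\alpha=c\,\e^{-\phi_{00}}=c\,\varTheta_{00}(\imag t^2)$ for some constant $c$. Substituting this into $\Dop^\times u_\alpha+\alpha u_\alpha=\frac1\imag\varTheta_{00}(\imag t^2)$, which holds by the computation in the proof of Theorem \ref{thm:main-1} and is valid for complex $\alpha$, yields
\[
c\,\Dop^\times\varTheta_{00}(\imag t^2)=\Big(\frac1\imag-c\alpha\Big)\varTheta_{00}(\imag t^2).
\]
If $c=0$, then $\varTheta_{00}\equiv0$, which contradicts its positivity in \eqref{eq:heat-8}. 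If $c\neq0$, then $\varTheta_{00}(\imag t^2)$ is a pure power $t^{\gamma}$, which contradicts its Gaussian decay at both ends. Either way we reach a contradiction, so $\alpha$ must be real.

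The main obstacle is the interpretive gap in the hypotheses. The setup notes that $\|\cdot\|_1$ is only a seminorm, and it is positivity of $\langle\cdot,\cdot\rangle_2$ (definiteness on the image $\Lop_{\phi_{00}}(\Hspace_{\mathrm{finite}})$) that must be invoked. I would state explicitly that "comes from a Hilbert space" means exactly this, namely that the functions $\Lop_{\phi_{00}}u$, $u\in\Hspace_{\mathrm{finite}}$, embed injectively into the Hilbert space. The remaining verifications are routine.
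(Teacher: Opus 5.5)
Your proposal is correct and follows the paper's proof. In both, Theorem \ref{thm:main-2}$(a)$ together with positive definiteness of $\langle\cdot,\cdot\rangle_2$ gives $\Lop_{\phi_{00}}u_\alpha=0$, and hence $u_\alpha=c\,\e^{-\phi_{00}}$. The paper simply declares this last identity impossible. You actually justify it: you substitute into $\Dop^\times u_\alpha+\alpha u_\alpha=\frac{1}{\imag}\varTheta_{00}(\imag t^2)$, rule out $c=0$ by positivity of $\varTheta_{00}$, and rule out $c\neq0$ because a constant times a power of $t$ cannot decay at both ends. That is a valid completion of the argument.
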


\begin{proof}
We apply part $(a)$ of Theorem \ref{thm:main-2}. If $\langle\cdot,\cdot\rangle_2$
comes from a Hilbert space, then by Theorem \ref{thm:main-2}$(a)$,  we know that
\[
\|\Lop_{\phi_{00}}u_\alpha\|_2^2=\langle\Lop_{\phi_{00}}u_\alpha,\Lop_{\phi_{00}}u_\alpha
\rangle_2=0
\] 
holds for any $\alpha\notin\R$ with $\Xi(\alpha)=0$. This entails that 
$\Lop_{\phi_{00}}u_\alpha=0$ and hence $u_\alpha=C\,\exp(-\phi_{00})$ must
hold for some constant $C$. This is impossible, which gives the desired contradiction. 
\end{proof}

\section{Remarks on the Jacobi identity and Appell transformation}

Having observed that the classical Appell transformation for the heat equation visually 
looks so much like the Jacobi identity \eqref{eq:heat-2}, and the connection does not 
appear to be understood in the literature, we decided to add a comment here.  

\subsection{Appell transformation}
The \emph{Appell transformation}  (see, e.g., \cite{Appell}, \cite{Widder}, \cite{Torre}), 
is a well-known transformation which preserves solutions to the heat equation.
If $u(z,\tau)$ solves the \eqref{eq:heat-1}, the Appell transformation 
sends it to the function $v(z,\tau)$ which solves the same heat equation, and it is given
by
\begin{equation}
v(z,\tau)= \heatkernel(z,\tau)\,u\bigg(\frac{\imag z}{\tau},\frac{1}{\tau}\bigg),
\end{equation}
where we recall the heat kernel $\heatkernel(z,\tau)$ given by \eqref{eq:heat-3}.


\subsection{Adding time reversion and a complex rotation in space}
We note that the change-of-variables $(z,\tau)\mapsto(-\imag z,-\tau)$ also preserves 
the heat equation \eqref{eq:heat-1}, so that
\begin{equation*}
\tilde v(z,\tau):=
v(-\imag z,-\tau)= \heatkernel(-\imag z,-\tau)\,u\bigg(\frac{z}{\tau},-\frac{1}{\tau}\bigg)
=\frac{1}{\imag}\heatkernel(z,\tau)\,u\bigg(\frac{z}{\tau},-\frac{1}{\tau}\bigg)
\end{equation*}
is yet another solution of the heat equation. Note that since the expression for the heat kernel
\eqref{eq:heat-3} involves a square root, the last equality involves a choice of that square root.
From this point of view, we may think of the Jacobi identity \eqref{eq:heat-2} as saying that 
when $u(z,\tau)=\vartheta_{00}(z,\tau)$, we get that 
$\tilde v(z,\tau)=-\imag \vartheta_{00}(z,\tau)$ is the associated modified Appell transformation 
of $u(z,\tau)$.



\end{document}